\documentclass[journal]{IEEEtran}
\usepackage{xcolor}
\usepackage{graphicx}
\usepackage{amsmath,amssymb}
\usepackage{xcolor}
\usepackage{setspace} 
\usepackage{units}
\usepackage[absolute,overlay]{textpos}
\usepackage{multirow}
\usepackage{multicol}
\usepackage{amsthm}
\usepackage{bm}
\usepackage{subfigure}
\usepackage{calligra}
\newtheorem{assum}{Assumption}
\newtheorem{definition}{Definition}

\newtheorem{theorem}{Theorem}

\title{\LARGE \bf
Discrete-Time Event-Triggered Extremum Seeking
}

\author{Victor Hugo Pereira Rodrigues$^{a}$,
Tiago Roux Oliveira$^{a}$,
Miroslav Krsti{\' c}$^{b}$,
Frank Allg{\" o}wer$^{c}$
\thanks{$^{a}$Department of Electronics and Telecommunication Engineering, State University of Rio de Janeiro (UERJ), Rio de Janeiro--RJ, Brazil (\tt\small{victor.rodrigues@uerj.br;  tiagoroux@uerj.br})}
\thanks{$^{b}$Department of Mechanical and Aerospace Engineering, University of California at San Diego (UCSD), La Jolla--CA, USA (\tt\small{mkrstic@ucsd.edu})}
\thanks{$^{c}$Institute for Systems Theory and Automatic Control, University of Stuttgart, Germany (\tt\small{allgower@ist.uni-stuttgart.de})}
}

\begin{document}

\maketitle
\thispagestyle{empty}
\pagestyle{empty}

\begin{abstract}

This paper proposes a discrete-time event-triggered extremum seeking control scheme for real-time optimization of nonlinear systems. Unlike conventional discrete-time implementations relying on periodic updates, the proposed approach updates the control input only when a state-dependent triggering condition is satisfied, reducing unnecessary actuation and communication. The resulting closed-loop system combines extremum seeking with an event-triggering mechanism that adaptively determines the input update instants. Using discrete-time averaging and Lyapunov analysis, we establish practical convergence of the trajectories to a neighborhood of the unknown  extremum point and show exponential stability of the associated average dynamics. The proposed method preserves the optimization capability of classical extremum seeking while significantly reducing the number of input updates. Simulation results illustrate the effectiveness of the approach for resource-aware real-time optimization.

\end{abstract}

\section{Introduction}
\label{sec:introduction}

This paper introduces a discrete-time event-triggered extremum seeking control  strategy for real-time optimization of nonlinear systems. Departing from the conventional paradigm of periodic discrete-time updates, the proposed method integrates model-free  extremum seeking (ES) with a state-dependent triggering rule that determines when the control input should be updated. In this way, the algorithm preserves the optimization capability of classical ES \cite{KW:2000,AK:2003,TRoux:2022} while significantly reducing unnecessary actuation and communication \cite{T:2007}.

It is worth noting that our previous work \cite{VHPR:2023a} on event-triggered extremum seeking was developed in a continuous-time setting, where the triggering mechanism was designed in conjunction with the continuous evolution of the plant and adaptation dynamics. In contrast, the present work addresses the fundamentally different challenges that arise in discrete-time implementations, where sampling, input holding, and update constraints play a central role in the system behavior. Hence, the extension from continuous-time to discrete-time event-triggered extremum seeking is therefore not a straightforward discretization, but constitutes the main original contribution of this paper, requiring a distinct design and analysis framework tailored to discrete-time dynamics.

From a theoretical standpoint, the analysis combines discrete-time averaging \cite{BFS:1988,PPY:2004} with Lyapunov arguments to show practical convergence of the closed-loop trajectories to a neighborhood of the unknown extremum and exponential stability of the associated average dynamics. These results provide a rigorous justification that the introduction of the triggering mechanism does not compromise the fundamental ES behavior, but rather complements it with a resource-aware update policy.

Beyond the specific design proposed here, this work highlights a broader and still underexplored direction: event-triggered control in discrete time. While most of the event-triggered literature has been developed in continuous time \cite{EDK:2010}, discrete-time formulations are particularly natural for digital implementations, where sampling, computation, and communication constraints inherently shape how and when updates can occur \cite{frank:2023,frank:2023b}. In this sense, the proposed scheme is not only an ES contribution, but also a step toward bridging discrete-time control and event-triggered mechanisms in a practically meaningful way.

Simulation results illustrates that a substantial reduction in input updates can be achieved without sacrificing optimization performance, reinforcing the relevance of the approach for model-free resource-constrained real-time applications.

\section{Preliminaries}
\label{sec:preliminaries}

\textbf{Notation.} Throughout the manuscript, the 2-norm (Euclidean) of vectors and induced norm of matrices are denoted by double bars $\|\cdot\|$ while absolute value of scalar variables are denoted by single bars $|\cdot|$. The terms $\lambda_{\min}(\cdot)$ and $\lambda_{\max}(\cdot)$ denote the minimum and maximum eigenvalues of a given positive definite matrix, respectively. Consider $\varepsilon \in \lbrack -\varepsilon_{0}\,, \varepsilon_{0} \rbrack \subset \mathbb{R}$ and the mappings $\delta_{1}(\varepsilon)$ and $\delta_{2}(\varepsilon)$, where $\delta_{1}: \lbrack -\varepsilon_{0}\,, \varepsilon_{0} \rbrack \to \mathbb{R}$ and $\delta_{2}: \lbrack -\varepsilon_{0}\,, \varepsilon_{0} \rbrack \to \mathbb{R}$. The function $\delta_1(\varepsilon)$ has magnitude of order $\delta_2(\varepsilon)$, denoted by $\delta_{1}(\varepsilon) = \mathcal{O}(\delta_{2}(\varepsilon))$, if there exist positive constants $k$ and $c$ such that $|\delta_{1}(\varepsilon)| \leq k |\delta_{2}(\varepsilon)|$, for all $|\varepsilon|<c$.

\begin{definition}[Shift Operator, \cite{AW:1997}] \label{def:shiftOperator}

Let $f=\{f[k]\}_{k\in\mathbb{Z}}$ denote a discrete-time sequence, where $f[k] \in \mathbb{R}^{n}$ represents the value of the sequence at index $k \in \mathbb{Z}$. Assume that $f[k]$ is obtained by sampling a continuous-time signal $f_a(t) \in \mathbb{R}^{n}$ with sampling period $\epsilon>0$, {\it i.e.}, $f[k] = f_a(k\epsilon)$, $k\in\mathbb{Z}$. The {\bf forward shift operator} $q$ is defined by $qf[k] = f[k+1]$. Similarly, the {\bf backward shift operator} is $q^{-1}f[k] = f[k-1]$. In the $z$-domain, $q$ is identified with the complex variable $z \in \mathbb{C}$.
\end{definition}

\begin{figure*}[h!]
\centering
\includegraphics[width=13cm]{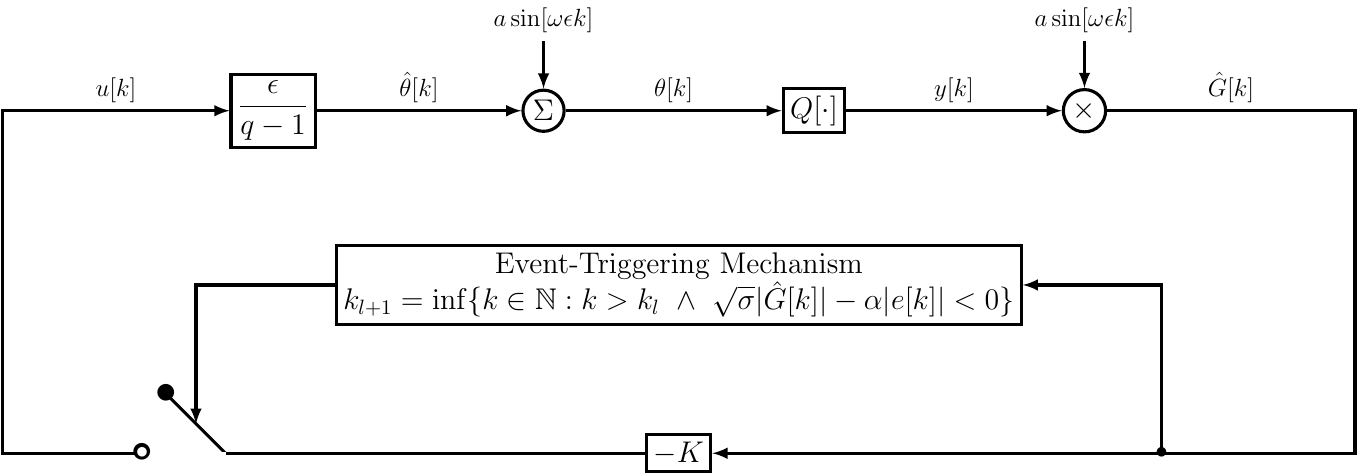}
\caption{Event-Triggered for Discrete-Time Gradient-based Extremum Seeking Control.}
\label{fig:BD_GradientES_v2}
\end{figure*}

\section{Problem Formulation} \label{sec:prblFrm_siso}

We define the following  nonlinear static map
\begin{align}
Q[\theta[k]] = Q^{\ast}+\frac{H^{\ast}}{2}(\theta[k]-\theta^{\ast})^{2}\,, \label{eq:Q_1_event_siso}
\end{align}
where $Q^{\ast}\in \mathbb{R}$ is the unknown extremum, $H^{\ast} \in \mathbb{R}-\{0\}$ is the unknown Hessian, $\theta^{\ast} \in \mathbb{R}$ is the unknown optimizer, and the input of the map $\theta[k]\in \mathbb{R}$ is designed as the real-time estimate $\hat{\theta}[k]\in \mathbb{R}$ of $\theta^{\ast}$ additively perturbed by the sinusoid $a \sin[\omega \epsilon k]$, {\it i.e.},
\begin{align}
\theta[k]=\hat{\theta}[k]+a \sin[\omega \epsilon k]\,. \label{eq:theta_event_siso}
\end{align}
The Hessian $H^{\ast}\!<\!0$ for maximization problems, and $H^{\ast}\!>\!0$ for minimization purposes.     

Let us define the estimation error 
\begin{align}
\tilde{\theta}[k]=\hat{\theta}[k]-\theta^{\ast}\,, \label{eq:thetaTilde_event_siso}
\end{align}
and the gradient estimate
\begin{align}
\hat{G}[k]=a \sin[\omega \epsilon k]~y[k]\,, \label{eq:hatG_event_siso}
\end{align}
by the employing the demodulation signal, $a \sin[\omega \epsilon k]$, which has nonzero amplitudes $a$ and frequency $\omega$ \cite{GKN:2012,K:2014}.

From (\ref{eq:theta_event_siso}) and (\ref{eq:thetaTilde_event_siso}), we can write
\begin{align}
\theta[k]=\tilde{\theta}[k]+a \sin[\omega \epsilon k]+\theta^{\ast}\,, \label{eq:theta_2_event_siso}
\end{align}
and, therefore, by using (\ref{eq:theta_2_event_siso}) the output $y[k]=Q[\theta[k]]$, with $Q[\theta[k]]$ given by (\ref{eq:theta_event_siso}) can also be written as
\begin{align}
y(t)&=Q^{\ast}+\frac{H^{\ast}a^{2}}{4}+\frac{H^{\ast}}{2}\tilde{\theta}^{2}[k]+a \sin[\omega \epsilon k]H^{\ast}\tilde{\theta}[k]\nonumber\\
&\quad-\frac{H^{\ast}a^{2}}{4} \cos[2\omega \epsilon k]\,.
 \label{eq:y_1_event_siso}
\end{align}
Thus, from (\ref{eq:hatG_event_siso}) and (\ref{eq:y_1_event_siso}), the gradient estimate, is given by
\begin{align}
\hat{G}[k]&= \frac{a^2 H^{\ast}}{2}\left(1-\cos[2\omega \epsilon k]\right)\tilde{\theta}[k]+\frac{a H^{\ast}}{2}\sin[\omega \epsilon k]\tilde{\theta}^{2}[k]\nonumber \\
&\quad+\left(a Q^{\ast}+\frac{3 a^{3}H^{\ast}}{8}\right)\sin[\omega \epsilon k]-\frac{a^{3}H^{\ast}}{8}\sin[3\omega \epsilon k]\,. \label{eq:hatG_2_event_siso}
\end{align} 
Notice the quadratic term in $\tilde{\theta}[k]$ in (\ref{eq:hatG_2_event_siso}) may be neglected in a local analysis \cite{AK:2003}. Thus, hereafter the gradient estimate is given by
\begin{align}
\hat{G}[k]&=\frac{a^2 H^{\ast}}{2}\left(1-\cos[2\omega \epsilon k]\right)\tilde{\theta}[k]\nonumber \\
&\quad+\left(a Q^{\ast}+\frac{3 a^{3}H^{\ast}}{8}\right)\sin[\omega \epsilon k]-\frac{a^{3}H^{\ast}}{8}\sin[3 \omega \epsilon k] \,. \label{eq:hatG_4_event_siso}
\end{align}
On the other hand, from the \textit{Event-Triggered for Discrete-Time Gradient-based Extremum Seeking Control} scheme depicted in Fig.~\ref{fig:BD_GradientES_v2}, by using the shift operator given in Definition~\ref{def:shiftOperator}, the discrete estimate of $\theta^{\ast}$ can be found as 
\begin{align}
    \hat{\theta}[k]=\frac{h}{q-1}u[k]\,. \label{eq:thetaHat_20260313}
\end{align}
Then, by plugging (\ref{eq:thetaHat_20260313}) in (\ref{eq:thetaTilde_event_siso}), the dynamics that governs $\tilde{\theta}[k]$, is given by
\begin{align}
\tilde{\theta}[k]&=\frac{\epsilon}{q-1}u[k]-\theta^{\ast} \label{eq:dtildeThetadt_20250206_1}\,, 
\end{align}
or, equivalently, 
\begin{align}
\tilde{\theta}[k+1]&=\tilde{\theta}[k]+\epsilon u[k] \label{eq:tildeThetaK+1_20260313_1}\,, 
\end{align}
with $u[k] \in \mathbb{R}$ since $\theta^{\ast}[k]=\theta^{\ast}$, such that $(q-1)\theta^{\ast}[k]=\theta^{\ast}[k+1]-\theta^{\ast}[k]=\theta^{\ast}-\theta^{\ast}=0$. Moreover, by using (\ref{eq:hatG_4_event_siso}), the gradient estimate in the iteration $k+1$ is given by
\begin{align}
&\hat{G}[k+1]=
\nonumber \\
&\frac{a^2 H^{\ast}}{2}\left(1\mathbb{-}\cos[2\omega \epsilon]\cos[2\omega \epsilon k]\mathbb{+}\sin[2\omega \epsilon]\sin[2\omega \epsilon k]\right)\tilde{\theta}[k\mathbb{+}1]\nonumber \\
&+\left(a Q^{\ast}+\frac{3 a^{3}H^{\ast}}{8}\right)\left(\cos[\omega \epsilon]\sin[\omega \epsilon k]+\sin[\omega \epsilon]\cos[\omega \epsilon k]\right) \nonumber \\
&-\frac{a^{3}H^{\ast}}{8}\left(\cos[3\omega \epsilon]\sin[3\omega \epsilon k]+\sin[3\omega \epsilon]\cos[3\omega \epsilon k]\right) \,, \label{eq:hatG_20260313_2} 
\end{align}
thus, plugging (\ref{eq:tildeThetaK+1_20260313_1}) in (\ref{eq:hatG_20260313_2}), one has 
\begin{align}
&\hat{G}[k+1]=
\nonumber \\
&\frac{a^2 H^{\ast}}{2}\left(1\mathbb{-}\cos[2\omega \epsilon]\cos[2\omega \epsilon k]\mathbb{+}\sin[2\omega \epsilon]\sin[2\omega \epsilon k]\right)\tilde{\theta}[k]\nonumber \\
&\mathbb{+}\frac{a^2 H^{\ast}}{2}\left(1\mathbb{-}\cos[2\omega \epsilon]\cos[2\omega \epsilon k]\mathbb{+}\sin[2\omega \epsilon]\sin[2\omega \epsilon k]\right)\epsilon u[k]\nonumber \\
&\mathbb{+}\left(a Q^{\ast}+\frac{3 a^{3}H^{\ast}}{8}\right)\left(\cos[\omega \epsilon]\sin[\omega \epsilon k]+\sin[\omega \epsilon]\cos[\omega \epsilon k]\right) \nonumber \\
&\mathbb{-}\frac{a^{3}H^{\ast}}{8}\left(\cos[3\omega \epsilon]\sin[3\omega \epsilon k]+\sin[3\omega \epsilon]\cos[3\omega \epsilon k]\right) \,. \label{eq:hatG_20260329_1} 
\end{align}
Now, \textcolor{blue}{adding} and \textcolor{red}{subtracting} the terms $\cos[2\omega \epsilon k]\tilde{\theta}[k]$, $\left(a Q^{\ast}+\frac{3 a^{3}H^{\ast}}{8}\right)\sin[\omega \epsilon k]$, and $\frac{a^{3}H^{\ast}}{8}\sin[3 \omega \epsilon k]$, equation (\ref{eq:hatG_20260329_1}) can be  rewritten as
\begin{small} 
\begin{align}
&\hat{G}[k+1]=\frac{a^2 H^{\ast}}{2}\left(1\textcolor{red}{\mathbb{-}\cos[2\omega \epsilon k]}\right)\tilde{\theta}[k]
\nonumber \\
&\textcolor{blue}{+\left(a Q^{\ast}+\frac{3 a^{3}H^{\ast}}{8}\right)\sin[\omega \epsilon k]} \textcolor{red}{-\frac{a^{3}H^{\ast}}{8}\sin[3 \omega \epsilon k]} \nonumber \\
&\frac{a^2 H^{\ast}}{2}\left(\textcolor{blue}{\cos[2\omega \epsilon k]}\mathbb{-}\cos[2\omega \epsilon]\cos[2\omega \epsilon k]\mathbb{+}\sin[2\omega \epsilon]\sin[2\omega \epsilon k]\right)\tilde{\theta}[k]\nonumber \\
&\mathbb{+}\frac{a^2 H^{\ast}}{2}\left(1\mathbb{-}\cos[2\omega \epsilon]\cos[2\omega \epsilon k]\mathbb{+}\sin[2\omega \epsilon]\sin[2\omega \epsilon k]\right)\epsilon u[k]\nonumber \\
&\mathbb{+}\!\left(\! a Q^{\ast}\mathbb{+}\frac{3 a^{3}H^{\ast}}{8} \!\right)\!\left(\cos[\omega \epsilon]\sin[\omega \epsilon k]\mathbb{+}\sin[\omega \epsilon]\cos[\omega \epsilon k]\mathbb{-}\textcolor{red}{\sin[\omega \epsilon k]}\right) \nonumber \\
&\mathbb{-}\frac{a^{3}H^{\ast}}{8}\left(\cos[3\omega \epsilon]\sin[3\omega \epsilon k]+\sin[3\omega \epsilon]\cos[3\omega \epsilon k]\mathbb{-}\textcolor{blue}{\sin[3\omega \epsilon k]}\right) \,. \label{eq:hatG_20260329_1_} 
\end{align}
\end{small}
Then, defining
\begin{align}
\Delta H^{\ast}[k]&\mathbb{=}-H^{\ast}\cos[2 \omega \epsilon k] \,, \label{eq:DeltaHast} \\
\Delta \tilde{H}^{\ast}[k]&\mathbb{=}\Delta H^{\ast}[k+1]-\Delta H^{\ast}[k] \\
&\mathbb{=}-H^{\ast}\cos[2\omega \epsilon]\cos[2\omega \epsilon k]+H^{\ast}\sin[2\omega \epsilon]\sin[2\omega \epsilon k] \nonumber \\
&\quad+H^{\ast}\cos[2\omega \epsilon k]\,, \label{eq:DeltaTildeHast} \\
\delta[k]&\mathbb{=}\left(a Q^{\ast}+\frac{3 a^{3}H^{\ast}}{8}\right)\sin[\omega \epsilon k]-\frac{a^{3}H^{\ast}}{8}\sin[3 \omega \epsilon k] \,,  \label{eq:delta} \\
\tilde{\delta }[k]&\mathbb{=}\delta [k+1]-\delta [k] \nonumber  \\
&=\!\left(\! a Q^{\ast}\mathbb{+}\frac{3 a^{3}H^{\ast}}{8} \!\right)\!\left(\cos[\omega \epsilon]\sin[\omega \epsilon k] \right. \nonumber \\
&\quad\left.\mathbb{+}\sin[\omega \epsilon]\cos[\omega \epsilon k]\mathbb{-}\sin[\omega \epsilon k]\right) \nonumber \\
&\quad\mathbb{-}\frac{a^{3}H^{\ast}}{8}\left(\cos[3\omega \epsilon]\sin[3\omega \epsilon k] \right. \nonumber \\
&\quad \left.+\sin[3\omega \epsilon]\cos[3\omega \epsilon k]\mathbb{-}\sin[3\omega \epsilon k]\right)\,, \label{eq:tilde_delta}
\end{align}
we can use (\ref{eq:DeltaHast})--(\ref{eq:tilde_delta}) to rewrite (\ref{eq:hatG_4_event_siso}) and (\ref{eq:hatG_20260329_1_}), respectively, as 
\begin{align}
\hat{G}[k]&=\frac{a^2}{2}\left(H^{\ast}+\Delta H^{\ast}[k]\right)\tilde{\theta}[k]+\delta[k] \,,\label{eq:hatG_20260313_1}  \\
\hat{G}[k+1]&=\hat{G}[k] +\frac{a^2}{2}\Delta \tilde{H}^{\ast}[k]\tilde{\theta}[k] \nonumber \\
&\quad+\frac{a^2}{2}\left(H^{\ast}+\Delta H^{\ast}[k]+\Delta \tilde{H}^{\ast}[k]\right)\epsilon u[k]+\tilde{\delta}[k]\,. \label{eq:hatG_20260313_4} 
\end{align}

\subsection{Event-Triggered of the Discrete-Time Gradient-Based\\ Extremum Seeking Tuning Law}

We consider the following static gradient-based feedback tuning law, given by 
\begin{align}
u[k]=-K\hat{G}[k] \,, \quad \forall k \in \mathbb{Z} \label{eq:u}
\end{align}
with the gain $K$ being designed such that $\text{sign}(K)=\text{sign}(H^*)$ and $\left|1-\frac{a^2\epsilon H^{\ast}K}{2}\right|\in (0,1)$. We assume that the tuning law (\ref{eq:u}) stabilizes the system at the corresponding average equilibrium $\hat{G}_{\rm{av}} \equiv 0$ with local exponential convergence.

Assuming no delays in the sensor-to-controller and controller-to-actuator communication paths, the control input is updated only at the discrete triggering iteration step $\{k_{l}\}_{l\in \mathbb{N}}$ and held constant over the inter-execution interval. In particular, the control input computed at the last triggering instant $k_l$ is applied for all $k \in [k_l, k_{l+1})$. Thus, the tuning law is given by
\begin{align}
u[k] = -K\hat{G}[k_l]\,, \quad \forall k \in [k_l, k_{l+1})\,, \quad l \in \mathbb{N}. 
\label{eq:u_discrete_event}
\end{align}
Since the control updates occur only at the triggering instants, the state-dependent quantities evolve between updates while the control input remains constant. This motivates the definition of the measurement error over the inter-execution interval as
\begin{align}
e[k] := \hat{G}[k_i] - \hat{G}[k]\,, \quad \forall k \in [k_l, k_{l+1})\,, \quad l \in \mathbb{N}. 
\label{eq:e_discrete_event}
\end{align}

Therefore, by using (\ref{eq:hatG_20260313_1}) and (\ref{eq:e_discrete_event}), $\forall k \in [k_l, k_{l+1})$, $l \in \mathbb{N}$, the event-triggered for discrete-time gradient-based tuning law (\ref{eq:u_discrete_event}) can be rewritten as
\begin{align}
u[k] &= -K\hat{G}[k]-Ke[k]  \,, \label{eq:u_20250602_v2} \\
&=-K\frac{a^2}{2}\left(H^{\ast}+\Delta H^{\ast}[k]\right)\tilde{\theta}[k]-K\delta[k]-Ke[k] \,. \label{eq:u_20250602_v3}
\end{align}
Now, plugging (\ref{eq:u_20250602_v2}) into (\ref{eq:hatG_20260313_4}) and (\ref{eq:u_20250602_v3}) into (\ref{eq:tildeThetaK+1_20260313_1}), we arrive at the following Input-to-State Stable (ISS) \cite{K:2002} representations for the dynamics of $\hat{G}[k]$ and $\tilde{\theta}[k]$ with respect to the error vector $e[k]$ in (\ref{eq:e_discrete_event}), $\forall k \in [k_l, k_{l+1})$, $l \in \mathbb{N}$: 
\begin{align}
\hat{G}[k\mathbb{+}1]&\mathbb{=}\hat{G}[k] +\frac{a^{2}}{2}\Delta \tilde{H}^{\ast}[k]\tilde{\theta}[k] \nonumber \\
&\quad-\epsilon\frac{a^{2}}{2}\left(H^{\ast}+\Delta H^{\ast}[k]+\Delta \tilde{H}^{\ast}[k]\right)K\hat{G}[k] \nonumber \\
&\quad-\epsilon\frac{a^{2}}{2}\left(H^{\ast}+\Delta H^{\ast}[k]+\Delta \tilde{H}^{\ast}[k]\right)Ke[k]+\tilde{\delta}[k]\,, \label{eq:hatG_20260318_1} \\
\tilde{\theta}[k\mathbb{+}1]&\mathbb{=}\tilde{\theta}[k]\mathbb{-}\epsilon K\frac{a^{2}}{2}\left(H^{\ast}\mathbb{+}\Delta H^{\ast}[k]\right)\tilde{\theta}[k]\mathbb{-}\epsilon Ke[k]\mathbb{-}\epsilon K\delta[k]\,. \label{eq:tildeThetaK+1_20260318_1}
\end{align}

\subsection{Event-Triggering Mechanism} 

In what follows, Definition~\ref{def:staticEvent_discrete} illustrates how the small design parameter $\sigma \in (0,1)$ and the error signal $e[k]$—representing the deviation between the gradient estimate $\hat{G}[k]$ and its last transmitted value—are leveraged to construct a static event-triggered mechanism for scalar 
gradient-based extremum seeking in discrete-time. The resulting strategy triggers the recalculation of the tuning law, as defined in (\ref{eq:u_discrete_event}) only when needed. The closed-loop system preserves asymptotic stability and guaranteed convergence, as supported by the theoretical results in \cite{EDK:2010}.

Before stating Definition~\ref{def:staticEvent_discrete}, we introduce the following assumption which will be used in the sequence.
\begin{assum} \label{assumption_lyapunovEq}
 There exist $\epsilon$, $a$, and $K$ such that  
 \begin{align}
      0<\left|1 -\frac{\epsilon a^{2} H^{\ast}K}{2}\right| <1\,,
 \end{align}
 and a known positive constant $\alpha$ satisfying 
\begin{small}
   \begin{align}
    \alpha > \frac{\epsilon a^{2} |H^{\ast}||K|}{\sqrt{2}}\frac{\sqrt{1+7\left(1 -\frac{\epsilon a^{2} H^{\ast}K}{2}\right)^{2}}}{\left(1-\left(1 -\frac{\epsilon a^{2} H^{\ast}K}{2}\right)^{2}\right)} \,. \label{eq:alpha}
   \end{align} 
\end{small}

\end{assum}

\begin{definition}[\small{\textbf{Event-Triggering Condition}}] \label{def:staticEvent_discrete}
The triggering mechanism for discrete-time gradient-based extremum seeking controller with static event-triggered condition consists of two components:
\begin{enumerate}
	\item A sequence of increasing triggering iteration step $\mathcal{K}$ such that $\mathcal{K}=\{k_{0}, k_{1}, k_{2}, \ldots\}$ with $k_{0}=0$, generated according to: \\
		\!\!\!\! $\bullet$ If $\left\{k \in \mathbb{Z}:~ k > k_{l} ~ \wedge ~ \sqrt{\sigma}|\hat{G}[k]| - \alpha |e[k]| < 0 \right\} \mathbb{=} \emptyset$,
		then the set of triggering instants is $\mathcal{K}=\{k_{0}, k_{1}, \ldots, k_{l}\}$. \\
		
		$\bullet$ If $\left\{k \in \mathbb{Z}:~ k > k_{l} ~ \wedge ~ \sqrt{\sigma}|\hat{G}[k]| - \alpha |e[k]| < 0 \right\} \mathbb{\neq} \emptyset$, 
		the next triggering instant is given by
		\begin{align}
		k_{l+1} = \inf\left\{k \in \mathbb{Z}:~ k \mathbb{>} k_{l} ~ \mathbb{\wedge} ~ \sqrt{\sigma}|\hat{G}[k]| \mathbb{-} \alpha |e[k]| \mathbb{<} 0 \right\}.
		\label{eq:k_next_event_}
		\end{align}
	
	\item A feedback control action (\ref{eq:u_discrete_event}) updated at the triggering instants $k_l$ and held constant for all $k \in [k_l, k_{l+1})$.
\end{enumerate}
\end{definition}

\subsection{Average Closed-Loop System}

Defining $\epsilon=h$ and the augmented state as follows
\begin{align}
x[k]=\begin{bmatrix} x_{1}[k]\,, x_{2}[k]\end{bmatrix}^{\top}:=\begin{bmatrix} \hat{G}[k]\,, \tilde{\theta}[k]\end{bmatrix}^{\top}\,, \label{eq:X}
\end{align}
the system (\ref{eq:hatG_20260318_1})--(\ref{eq:tildeThetaK+1_20260318_1}) reduces to
\begin{align}
x[k+1]&=x[k]+\epsilon f[k,x,\epsilon]\,, \label{eq:x_k+1_event}
\end{align}
where
\begin{align}
&f_{1}[k,x,\epsilon]=\mathbb{-}\frac{a^2}{2}\left(H^{\ast}\mathbb{+}\Delta H^{\ast}[k]\mathbb{+}\Delta \tilde{H}^{\ast}[k]\right)Kx_{1}[k] \nonumber \\
& \mathbb{+}\frac{a^2}{2}\Delta \tilde{H}^{\ast}[k]x_{2}[k]\mathbb{-}\frac{a^2}{2}\left(H^{\ast}\mathbb{+}\Delta H^{\ast}[k]\mathbb{+}\Delta \tilde{H}^{\ast}[k]\right)Ke[k]\nonumber \\
&\mathbb{+}\frac{1}{\epsilon}\tilde{\delta}[k]\,, \label{eq:f1_20260321_1} \\
&f_{2}[k,x,\epsilon]=\mathbb{-}K\frac{a^2}{2}\left(H^{\ast}\mathbb{+}\Delta H^{\ast}[k]\right)x_{2}[k]\mathbb{-}Ke[k] \mathbb{-}K\delta[k]\,. \label{eq:f2_20260321_1}
\end{align}
The discrete-time system (\ref{eq:x_k+1_event}) has a small parameter $\epsilon =h$ and the discrete-time vector field $f[k,x,\epsilon]$ is $T$-periodic in $k$ such that 
\begin{align}
T&= \frac{2\pi}{\omega}\,. \label{eq:T}
\end{align}  
Thus, the discrete-time averaging method \cite{BFS:1988,PPY:2004} can be applied to $f[k,x,\epsilon]$ in the limit $\epsilon \to 0$.

The averaging approach allows one to characterize in what sense the behavior of a constructed averaged autonomous system approximates that of the original non-autonomous system (\ref{eq:x_k+1_event}). Intuitively, when the system evolves on a slower time scale than the excitation, its behavior is predominantly governed by the average effect of the excitation over one period.

By applying the discrete-time averaging technique to (\ref{eq:x_k+1_event}), we obtain the averaged system
\begin{align}
x^{\rm{av}}[k+1] &= x^{\rm{av}}[k] + \epsilon f^{\rm{av}}[x^{\rm{av}}], \label{eq:xav_k+1_event} \\
f^{\rm{av}}[x^{\rm{av}}] &= \lim_{\epsilon \to 0}\lim_{T \to \infty} \frac{1}{T} \sum_{k=s+1}^{s+T} f[k,x^{\rm{av}},\epsilon]. \label{eq:fav_event}
\end{align}

Therefore, ``freezing'' the average states variables of $x_{1}[k]=\hat{G}[k]$ and $x_{2}[k]=\tilde{\theta}[k]$ in (\ref{eq:x_k+1_event})--(\ref{eq:f2_20260321_1}), (\ref{eq:hatG_20260313_1}) and (\ref{eq:e_discrete_event}), one gets, for all $k \in [k_l, k_{l+1})$, the corresponding average system
\begin{align}
\hat{G}_{\rm{av}}[k+1]&=\left(1 \!-\!\frac{\epsilon a^{2} H^{\ast}K}{2}\right)\hat{G}_{\rm{av}}[k]\!-\!\frac{\epsilon a^{2}H^{\ast}K}{2}e_{\rm{av}}[k]\,, \label{eq:hatG_av_k+1_1} \\
\tilde{\theta}_{\rm{av}}[k+1]&=\left(1 \!-\!\frac{\epsilon a^{2} KH^{\ast}}{2}\right)\tilde{\theta}_{\rm{av}}[k]\!-\!\epsilon Ke_{\rm{av}}[k]\,, \label{eq:tildeTheta_av_k+1_1} \\
\hat{G}_{\rm{av}}[k]&=H^{\ast}\tilde{\theta}_{\rm{av}}[k]\,, \label{eq:hatG_av_k_1} \\
e_{\rm{av}}[k] &= \hat{G}_{\rm{av}}[k_l] \!-\! \hat{G}_{\rm{av}}[k]\,, \quad \forall k \!\in\! [k_l, k_{l+1})\,, \quad l \!\in\! \mathbb{N}. 
\label{eq:e_av_k_1}
\end{align}

Now, we introduce {\bf Definition~\ref{def:staticEvent_discrete_av}} as an average version of  {\bf Definition~\ref{def:staticEvent_discrete}}.

\begin{definition}[\small{\textbf{Average Event-Triggering Condition}}] \label{def:staticEvent_discrete_av}
The \textit{\textbf{average}} triggering mechanism for discrete-time gradient-based extremum seeking controller with static event-triggered condition and the small design parameter $\sigma \in (0,1)$ consists of two components:
\begin{enumerate}
	\item A sequence of increasing triggering iteration step $\mathcal{K}$ such that $\mathcal{K}=\{k_{0}, k_{1}, k_{2}, \ldots\}$ with $k_{0}=0$, generated according to: \\
		\!\!\!\! $\bullet$ If  $\left\{k \in \mathbb{Z}:~ k > k_{l} ~ \mathbb{\wedge} ~ \sqrt{\sigma}|\hat{G}_{\rm{av}}[k]| - \alpha |e_{\rm{av}}[k]| \mathbb{<} 0 \right\} \mathbb{=} \emptyset$,
		then the set of triggering instants is $\mathcal{K}=\{k_{0}, k_{1}, \ldots, k_{l}\}$. \\
		
		$\bullet$ If $\left\{k \in \mathbb{Z}:~ k > k_{l} ~ \mathbb{\wedge} ~ \sqrt{\sigma}|\hat{G}_{\rm{av}}[k]| - \alpha |e_{\rm{av}}[k]| \mathbb{<} 0 \right\} \mathbb{\neq} \emptyset$,
		the next triggering instant is given by
		\begin{align}
		k_{l\mathbb{+}1} \mathbb{=} \inf\left\{k \mathbb{\in} \mathbb{Z}:~ k \mathbb{>} k_{l} ~ \mathbb{\wedge} ~ \sqrt{\sigma}|\hat{G}_{\rm{av}}[k]| \mathbb{-} \alpha |e_{\rm{av}}[k]| \mathbb{<} 0 \right\}.
		\label{eq:k_next_event}
		\end{align}
	
	\item A feedback control action (\ref{eq:u_discrete_event}) updated at the triggering instants $k_l$ and held constant for all $k \in [k_l, k_{l+1})$,
    		\begin{align}
			u_{\rm{av}}[k]=-K\hat{G}_{\rm{av}}[k_{l}] \,. \label{eq:U_MD2}
		\end{align}
\end{enumerate}
\end{definition}

\section{Stability Analysis}\label{sec:ET-DT-gradient-ES_stability}

Theorem~\ref{thm:NETESC_2} states the local exponential practical stability of the ES of Fig.~\ref{fig:BD_GradientES_v2} based on the proposed discrete-time event-triggering execution mechanism.

\begin{theorem} \label{thm:NETESC_2}
Consider the closed-loop average dynamics of the gradient estimate (\ref{eq:hatG_av_k+1_1}), the average error vector (\ref{eq:e_av_k_1}), under  Assumption \ref{assumption_lyapunovEq}, and the average \textbf{static} event-triggering mechanism given by \textbf{Definition \ref{def:staticEvent_discrete_av}}. For $\epsilon>0$ sufficiently small, the equilibrium $(\hat{G}_{\rm{av}}\,,\tilde{\theta}_{\rm{av}})=(0\,,0)$ is locally exponentially stable such that the following inequalities can be obtained for the non-average signals:
\begin{align}
|\theta[k]\mathbb{-}\theta^{\ast}|  &\leq  \left(1\mathbb{-} \left(1\mathbb{-}\left(1 \mathbb{-}\frac{\epsilon a^{2} H^{\ast}K}{2}\right)^{2}\right)\frac{\left(1 \mathbb{-} \sigma\right)}{2}\right)^{\frac{k}{2}} \nonumber \\
 &\quad\times|\theta[0]-\theta^{\ast}|+\mathcal{O}\left(a+\epsilon\right)\,,  \label{eq:normTheta_thm1} \\
   |y[k] - Q^{\ast}|&\leq 2\left(1\mathbb{-} \left(1\mathbb{-}\left(1 \mathbb{-}\frac{\epsilon a^{2} H^{\ast}K}{2}\right)^{2}\right)\frac{\left(1 \mathbb{-} \sigma\right)}{2}\right)^{k} \nonumber \\
 &\quad\mathbb{\times}|y[0]-Q^{\ast}|+\mathcal{O}\left(a^2+\epsilon^2\right)\,. \label{eq:normY_thm1} 
\end{align}
In addition, there exists a lower bound  $k^{\ast}$ for the inter-execution interval $k_{l+1}-k_{l}$, for all $l \in \mathbb{N}$, precluding the Zeno-like behavior.
\end{theorem}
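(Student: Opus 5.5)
We work with the average dynamics (\ref{eq:hatG_av_k+1_1}) and write $A:=1-\frac{\epsilon a^{2}H^{\ast}K}{2}$, so that $|A|\in(0,1)$ by Assumption~\ref{assumption_lyapunovEq}. The closed loop then reads $\hat G_{\rm av}[k+1]=A\hat G_{\rm av}[k]-(1-A)e_{\rm av}[k]$. Take the Lyapunov function $V[k]=P\hat G_{\rm av}^2[k]$, where $P>0$ is chosen from the scalar discrete Lyapunov equation $A^2P-P=-1$, that is, $P=1/(1-A^2)$.

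\textbf{Step 1 (Lyapunov decrease).} Along trajectories we have
\[
\Delta V=-\hat G_{\rm av}^2-2PA(1-A)\hat G_{\rm av}e_{\rm av}+P(1-A)^2e_{\rm av}^2 .
\]
Bound the cross term with Young's inequality, $2|xy|\le \frac{x^2}{2}+2y^2$ (or a similar split). This gives $\Delta V\le -\tfrac12\hat G_{\rm av}^2+c\,e_{\rm av}^2$, where $c$ is an explicit function of $P$, $A$ and $(1-A)=\epsilon a^2H^\ast K/2$; it should contain the factor $(1+7A^2)$ that appears in (\ref{eq:alpha}). Between triggers, Definition~\ref{def:staticEvent_discrete_av} enforces $\alpha^2e_{\rm av}^2\le\sigma\hat G_{\rm av}^2$. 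The choice of $\alpha$ in (\ref{eq:alpha}) is exactly what makes $c/\alpha^2\le\tfrac12$, so that $\Delta V\le-\frac{1-\sigma}{2}\hat G_{\rm av}^2=-\frac{(1-\sigma)(1-A^2)}{2}V$. Hence $V[k]\le\big(1-(1-A^2)\frac{1-\sigma}{2}\big)^kV[0]$, and therefore
\[
|\hat G_{\rm av}[k]|\le\Big(1-(1-A^2)\tfrac{1-\sigma}{2}\Big)^{k/2}|\hat G_{\rm av}[0]| .
\]
Since $\hat G_{\rm av}=H^\ast\tilde\theta_{\rm av}$ by (\ref{eq:hatG_av_k_1}), the same rate holds for $\tilde\theta_{\rm av}$. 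This establishes local exponential stability of $(0,0)$. The main obstacle of the proof is matching the constants exactly to (\ref{eq:alpha}); if the first Young split does not reproduce $\sqrt{1+7A^2}/\sqrt2$, I would tune the weight in Young's inequality until it does.

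\textbf{Step 2 (return to the original system).} Invoke the discrete-time averaging theorem \cite{BFS:1988,PPY:2004} for (\ref{eq:x_k+1_event}). Because the average equilibrium is exponentially stable, for $\epsilon$ small we get $|\tilde\theta[k]-\tilde\theta_{\rm av}[k]|=\mathcal O(\epsilon)$ uniformly in $k$. Then $\theta[k]-\theta^\ast=\tilde\theta[k]+a\sin[\omega\epsilon k]$, so the triangle inequality yields (\ref{eq:normTheta_thm1}) with residual $\mathcal O(a+\epsilon)$; the initial condition is absorbed using $\tilde\theta[0]=\theta[0]-\theta^\ast$. For (\ref{eq:normY_thm1}), write $y-Q^\ast=\frac{H^\ast}{2}(\theta-\theta^\ast)^2$, square the previous bound, and use $(p+q)^2\le2p^2+2q^2$. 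This gives the factor $2$, the rate raised to the power $k$, and the residual $\mathcal O(a^2+\epsilon^2)$.

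\textbf{Step 3 (no Zeno).} Right after a trigger we have $e_{\rm av}[k_l]=0$, so the condition $\sqrt\sigma|\hat G_{\rm av}|\ge\alpha|e_{\rm av}|$ holds strictly unless $\hat G_{\rm av}=0$, in which case there are no further updates. This alone gives $k_{l+1}-k_l\ge1$, which already precludes Zeno behavior in discrete time. For a sharper bound, iterate the dynamics from $k_l$ with $u$ frozen: $e_{\rm av}[k]=(1-A^{k-k_l})\,\cdots$, up to the hold-induced term. Explicitly, $\hat G_{\rm av}[k]=\hat G_{\rm av}[k_l]-(k-k_l)(1-A)\hat G_{\rm av}[k_l]$, so $e_{\rm av}=(k-k_l)(1-A)\hat G_{\rm av}[k_l]$ and $\hat G_{\rm av}[k]=(1-(k-k_l)(1-A))\hat G_{\rm av}[k_l]$. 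The ratio $|e_{\rm av}|/|\hat G_{\rm av}|$ therefore depends only on $j=k-k_l$. Consequently the trigger cannot fire before the smallest $j$ satisfying $\alpha j|1-A|>\sqrt\sigma|1-j(1-A)|$. This defines $k^\ast=\lceil\sqrt\sigma/(|1-A|(\alpha+\sqrt\sigma))\rceil\ge1$, a state-independent lower bound on the inter-execution time.
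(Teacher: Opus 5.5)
Your Steps 1 and 2 match the paper. The paper uses $V=\hat G_{\rm av}^2$, which is yours up to the factor $P=1/(1-A^2)$. It bounds the cross term by Peter--Paul and uses $\alpha|e_{\rm av}|\le\sqrt\sigma|\hat G_{\rm av}|$ to reach the same contraction factor $1-(1-A^2)\frac{1-\sigma}{2}$. It then applies the same averaging and triangle-inequality transfer; for $y$ it uses Peter--Paul with weight $\tfrac12$ where you use $(p+q)^2\le 2p^2+2q^2$, with the same result. You need no weight tuning in Step 1. With $x=\hat G_{\rm av}$ and $y=PA(1-A)e_{\rm av}$, your split gives $c=\frac{(1-A)^2(1+A^2)}{(1-A^2)^2}$. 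So $c/\alpha^2\le\tfrac12$ only requires $\alpha\ge\frac{\epsilon a^2|H^\ast||K|}{\sqrt2}\frac{\sqrt{1+A^2}}{1-A^2}$, which (\ref{eq:alpha}) implies; the paper's factor $1+7A^2$ just reflects a looser cross-term estimate.

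The genuine difference is Step 3. The paper derives the same closed forms $\hat G_{\rm av}[k_l+j]=(1-j(1-A))\hat G_{\rm av}[k_l]$ and $e_{\rm av}[k_l+j]=j(1-A)\hat G_{\rm av}[k_l]$. It then perturbs them by the $\mathcal O(\epsilon)$ averaging errors to get a lower bound on $|\hat G[k]|$ and an upper bound on $|e[k]|$ for the non-averaged signals, and defines $k^\ast$ implicitly as the first $j$ where these bounds cross. You instead stay in the averaged loop of Definition~\ref{def:staticEvent_discrete_av} and note that $|e_{\rm av}|/|\hat G_{\rm av}|$ depends only on $j$. This gives the explicit, state-independent bound $k^\ast=\lceil\sqrt\sigma/((1-A)(\alpha+\sqrt\sigma))\rceil$. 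It is a valid lower bound: $j<k^\ast$ forces $j(1-A)(\alpha+\sqrt\sigma)<\sqrt\sigma$, hence $j(1-A)<1$, and no trigger can occur.

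Your route buys a uniform dwell time of order $1/\epsilon$ for exactly the mechanism named in the theorem's hypotheses. The paper's route speaks to the non-averaged $\hat G[k]$ and $e[k]$, but at a price. Its $k^\ast$ depends on $\hat G_{\rm av}[k_l]$ and on unspecified $\mathcal O(\epsilon)$ constants. It collapses to the trivial one-step bound once $|\hat G_{\rm av}[k_l]|$ is comparable to the averaging error. You would inherit the same non-uniformity if you extended your bound to the implemented rule of Definition~\ref{def:staticEvent_discrete}.

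Finally, discard the aside $e_{\rm av}[k]=(1-A^{k-k_l})\cdots$. With the input held, the evolution is affine in $j$, not geometric, as your next line correctly shows.
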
 

\begin{proof}
The proof of the theorem is divided into two parts: practical asymptotic stability and avoidance of Zeno-like behavior.

\begin{flushleft}
\underline{\it A. Practical Asymptotic Stability}
\end{flushleft}

Now, consider the following Lyapunov function candidate for the average system (\ref{eq:hatG_av_k+1_1}): 
\begin{align}
    V[\hat{G}_{\rm{av}}[k]]=\hat{G}_{\rm{av}}^{2}[k]\,. \label{eq:lyapfunc}
\end{align}
Along the trajectories of the system, by using (\ref{eq:hatG_av_k+1_1}), we can write 
\begin{align}
    &\Delta V[\hat{G}_{\rm{av}}[k]]= V[\hat{G}_{\rm{av}}[k+1]]-V[\hat{G}_{\rm{av}}[k]] \label{eq:lyapfunc_0} \\
    &=\hat{G}_{\rm{av}}^{2}[k+1]-\hat{G}_{\rm{av}}^{2}[k] \nonumber \\
    &= \left(1 -\frac{\epsilon a^{2} H^{\ast}K}{2}\right)^{2}\hat{G}_{\rm{av}}^{2}[k]+\frac{\epsilon^{2} a^{4} H^{\ast 2}K^{2}}{4}e_{\rm{av}}^{2}[k]\nonumber \\
    &-\left(1 -\frac{\epsilon a^{2} H^{\ast}K}{2}\right)\epsilon a^{2} H^{\ast}K\hat{G}_{\rm{av}}[k]e_{\rm{av}}[k]-\hat{G}_{\rm{av}}^{2}[k] \,.\label{eq:lyapfunc_1}
\end{align}
Then, an upper bound for (\ref{eq:lyapfunc_1}) can be found as
\begin{align}
    \Delta V[\hat{G}_{\rm{av}}[k]]&\leq -\left(1-\left(1 -\frac{\epsilon a^{2} H^{\ast}K}{2}\right)^{2}\right)\hat{G}_{\rm{av}}^{2}[k] \nonumber \\
    &+\left(1 -\frac{\epsilon a^{2} H^{\ast}K}{2}\right)\epsilon a^{2} |H^{\ast}||K||\hat{G}_{\rm{av}}[k]||e_{\rm{av}}[k]|\nonumber \\
    &+\frac{\epsilon^{2} a^{4} H^{\ast 2}K^{2}}{4}e_{\rm{av}}^{2}[k]\,. \label{eq:lyapfunc_2}
\end{align}
Now, applying the Peter-Paul inequality \cite{W:1971}, $cd\leq \frac{\varepsilon c^2}{2} +\frac{d^2}{2\varepsilon}$, for all $c,d,\varepsilon>0$, with $c=|\hat{G}^{\top}_{\rm{av}}[k]|$, $$d=\left(1 -\frac{\epsilon a^{2} H^{\ast}K}{2}\right)a^{2} |H^{\ast}||K||e_{\rm{av}}[k]|$$ and $$\varepsilon=\frac{\left(1-\left(1 -\frac{\epsilon a^{2} H^{\ast}K}{2}\right)^{2}\right)}{2\epsilon},$$ inequality (\ref{eq:lyapfunc_2}) is upper bounded by
 \begin{align}
    &\Delta V[\hat{G}_{\rm{av}}[k]]  \leq -\frac{1}{2}\left(1-\left(1 -\frac{\epsilon a^{2} H^{\ast}K}{2}\right)^{2}\right)\hat{G}_{\rm{av}}^{2}[k] \nonumber \\
    &+\frac{2\left(1 -\frac{\epsilon a^{2} H^{\ast}K}{2}\right)^{2}\epsilon^{2} a^{4} |H^{\ast}|^{2}|K|^{2}}{\left(1-\left(1 -\frac{\epsilon a^{2} H^{\ast}K}{2}\right)^{2}\right)}|e_{\rm{av}}[k]|^{2}\nonumber \\
    &+\frac{\epsilon^{2} a^{4} H^{\ast 2}K^{2}}{4}e_{\rm{av}}^{2}[k] \nonumber \\
    &=-\frac{1}{2}\left(1-\left(1 -\frac{\epsilon a^{2} H^{\ast}K}{2}\right)^{2}\right)\left(\hat{G}_{\rm{av}}^{2}[k]\right. \nonumber \\
    &\left.-\frac{\epsilon^{2} a^{4} H^{\ast 2}K^{2}}{2}\frac{\left(1+7\left(1 -\frac{\epsilon a^{2} H^{\ast}K}{2}\right)^{2}\right)}{\left(1-\left(1 -\frac{\epsilon a^{2} H^{\ast}K}{2}\right)^{2}\right)^{2}}e_{\rm{av}}^{2}[k]\right) \,. \label{eq:lyapfunc_3}
\end{align}   
By using (\ref{eq:alpha}), inequality (\ref{eq:lyapfunc_3}) is upper bounded by
 \begin{align}
    &\Delta V[\hat{G}_{\rm{av}}[k]] \leq   \nonumber \\
    &-\frac{1}{2}\left(1-\left(1 -\frac{\epsilon a^{2} H^{\ast}K}{2}\right)^{2}\right)\left(|\hat{G}_{\rm{av}}[k]|^2 - \alpha^{2} |e_{\rm{av}}[k]|^{2}\right) \nonumber \\
    &=-\frac{1}{2}\left(1-\left(1 -\frac{\epsilon a^{2} H^{\ast}K}{2}\right)^{2}\right)\left(|\hat{G}_{\rm{av}}[k]| + \alpha |e_{\rm{av}}[k]|\right) \nonumber \\
    &\quad\times \left(|\hat{G}_{\rm{av}}[k]| - \alpha |e_{\rm{av}}[k]|\right)\,. \label{eq:lyapfunc_4}
\end{align}
In the proposed event-triggering mechanism, the update law is (\ref{eq:U_MD2}) such that the vector $u_{\rm{av}}[k]$ is held constant between two consecutive events, and therefore
\begin{align}
\alpha|e_{\rm{av}}[k]|& \leq \sqrt{\sigma}|\hat{G}_{\rm{av}}[k]|\,, \quad \sigma \in (0,1)\,. \label{eq:eAv_upperBound}
\end{align}
Now, by using (\ref{eq:eAv_upperBound}), inequality (\ref{eq:lyapfunc_4}) is upper bounded as 
 \begin{align}
    &\Delta V[\hat{G}_{\rm{av}}[k]] \leq  \nonumber \\
    &-\frac{1}{2}\left(1-\left(1 -\frac{\epsilon a^{2} H^{\ast}K}{2}\right)^{2}\right)\left(|\hat{G}_{\rm{av}}[k]| + \sqrt{\sigma}|\hat{G}_{\rm{av}}[k]|\right) \nonumber \\
    &\quad\times \left(|\hat{G}_{\rm{av}}[k]| - \sqrt{\sigma}|\hat{G}_{\rm{av}}[k]|\right) \nonumber \\
    &=-\frac{1}{2}\left(1-\left(1 -\frac{\epsilon a^{2} H^{\ast}K}{2}\right)^{2}\right)\left(1 - \sigma\right)|\hat{G}_{\rm{av}}[k]|^2  \nonumber \\
    &=-\frac{1}{2}\left(1-\left(1 -\frac{\epsilon a^{2} H^{\ast}K}{2}\right)^{2}\right)\left(1 - \sigma\right)V[\hat{G}_{\rm{av}}[k]]\,. \label{eq:lyapfunc_5}
\end{align}
From (\ref{eq:lyapfunc_0}), $V[\hat{G}_{\rm{av}}[k+1]]= V[\hat{G}_{\rm{av}}[k]]+\Delta V[\hat{G}_{\rm{av}}[k]]$, and by using (\ref{eq:lyapfunc_5}), one has 
 \begin{align}
 &V[\hat{G}_{\rm{av}}[k+1]]= V[\hat{G}_{\rm{av}}[k]]+\Delta V[\hat{G}_{\rm{av}}[k]] \nonumber \\
    &\leq \left(1- \left(1-\left(1 -\frac{\epsilon a^{2} H^{\ast}K}{2}\right)^{2}\right)\frac{\left(1 - \sigma\right)}{2}\right)V[\hat{G}_{\rm{av}}[k]]\,. \label{eq:lyapfunc_6}
\end{align}
The recursive structure of (\ref{eq:lyapfunc_6}) allows us to derive an upper bound for the evolution of the Lyapunov function (\ref{eq:lyapfunc}) over successive intervals such that we can iteratively relate the Lyapunov function at the current iteration step $k$ to its value at earlier triggering times. This approach lead us to
 \begin{align}
 V[\hat{G}_{\rm{av}}[k]]&\mathbb{\leq} \left(1\mathbb{-} \left(1\mathbb{-}\left(1 \mathbb{-}\frac{\epsilon a^{2} H^{\ast}K}{2}\right)^{2}\right)\frac{\left(1 \mathbb{-} \sigma\right)}{2}\right)^{k} \nonumber \\
 &\quad \times V[\hat{G}_{\rm{av}}[0]]\,.\label{eq:lyapfunc_7}
\end{align}
Therefore, by using (\ref{eq:lyapfunc}), we obtain 
\begin{align}
 |\hat{G}_{\rm{av}}[k]|\mathbb{\leq} \left(1\mathbb{-} \left(1\mathbb{-}\left(1 \mathbb{-}\frac{\epsilon a^{2} H^{\ast}K}{2}\right)^{2}\right)\frac{\left(1 \mathbb{-} \sigma\right)}{2}\right)^{\frac{k}{2}} \!\!|\hat{G}_{\rm{av}}[0]|\,.\label{eq:lyapfunc_8} 
\end{align}
Moreover, from (\ref{eq:hatG_av_k_1}), $|\hat{G}_{\rm{av}}[k]|=|H^{\ast}||\tilde{\theta}_{\rm{av}}[k]|$, and, therefore, from (\ref{eq:lyapfunc_8}), we can state 
\begin{align}
 |\tilde{\theta}_{\rm{av}}[k]|\mathbb{\leq} \left(1\mathbb{-} \left(1\mathbb{-}\left(1 \mathbb{-}\frac{\epsilon a^{2} H^{\ast}K}{2}\right)^{2}\right)\frac{\left(1 \mathbb{-} \sigma\right)}{2}\right)^{\frac{k}{2}} \!\!|\tilde{\theta}_{\rm{av}}[0]|\,.\label{eq:lyapfunc_9} 
\end{align}

Since (\ref{eq:f2_20260321_1}) is characterized by a $T$-periodic discrete-time vector field and noting that the corresponding averaged system with state variable $\tilde{\theta}_{\rm{av}}[k]$ is exponentially stable as established in (\ref{eq:lyapfunc_9}), the conditions of \cite[Theorem~2.2.1]{BFS:1988} 
are satisfied. In particular, for sufficiently small $\epsilon > 0$ and initial conditions $\tilde{\theta}[0]$ sufficiently close to the origin, the solutions of the original system and the averaged system remain close over finite time intervals. Therefore,
\begin{align}
|\tilde{\theta}[k]-\tilde{\theta}_{\rm{av}}[k]|\leq\mathcal{O}\left(\epsilon\right)\,. \label{eq:plotnikov}
\end{align}
Now, adding and subtracting $\tilde{\theta}_{\rm{av}}[k]$ in the right-hand side of (\ref{eq:theta_2_event_siso}), one has
\begin{align}
\theta[k] - \theta^\ast = \tilde{\theta}_{\rm{av}}[k]+\tilde{\theta}[k]-\tilde{\theta}_{\rm{av}}[k] + a\sin[\omega \epsilon k]\,, \label{eq:q_ETSSC_v3cu}
\end{align}  
whose norm can be upper bounded by using the triangle inequality \cite{A:1957}, leading to 
\begin{align}
|\theta[k] - \theta^\ast| \leq |\tilde{\theta}_{\rm{av}}[k]|+|\tilde{\theta}[k]-\tilde{\theta}_{\rm{av}}[k]| + |a\sin[\omega \epsilon k]|\,.\label{eq:q_ETSSC_v3cu2}
\end{align} 
Thus, by using (\ref{eq:lyapfunc_9}) and (\ref{eq:plotnikov}), inequality (\ref{eq:q_ETSSC_v3cu2}) is upper bounded by 
\begin{align}
|\theta[k]\mathbb{-}\theta^{\ast}|  &\mathbb{\leq}  \left(1\mathbb{-} \left(1\mathbb{-}\left(1 \mathbb{-}\frac{\epsilon a^{2} H^{\ast}K}{2}\right)^{2}\right)\frac{\left(1 \mathbb{-} \sigma\right)}{2}\right)^{\frac{k}{2}} \nonumber \\
 &\quad\times|\theta[0]-\theta^{\ast}|+\mathcal{O}\left(a+\epsilon\right)\,,\label{eq:q_ETSSC_v4}
\end{align} 
then, inequality (\ref{eq:normTheta_thm1}) is verified.

Now, since $y[k]=Q[\theta[k]]$, from (\ref{eq:Q_1_event_siso}) and \ref{eq:q_ETSSC_v4}), one has 
\begin{align}
    |y[k] \mathbb{-} Q^{\ast}|&\mathbb{=}\frac{1}{2}|H^{\ast}|(\theta[k]-\theta^{\ast})^{2} \label{eq:normy_siso}\\
    &\mathbb{\leq} \frac{1}{2}|H^{\ast}|\left(1\mathbb{-} \left(1\mathbb{-}\left(1 \mathbb{-}\frac{\epsilon a^{2} H^{\ast}K}{2}\right)^{2}\right)\frac{\left(1 \mathbb{-} \sigma\right)}{2}\right)^{k} \nonumber \\
 &\quad\times|\theta[0]-\theta^{\ast}|^{2}+\mathcal{O}\left(a+\epsilon\right)^{2} \nonumber \\
 &\quad \mathbb{+}|H^{\ast}|\left(1\mathbb{-} \left(1\mathbb{-}\left(1 \mathbb{-}\frac{\epsilon a^{2} H^{\ast}K}{2}\right)^{2}\right)\frac{\left(1 \mathbb{-} \sigma\right)}{2}\right)^{\frac{k}{2}} \nonumber \\
 &\quad\times|\theta[0]-\theta^{\ast}|\mathcal{O}\left(a+\epsilon\right)\,. \label{eq:yQ_1}
\end{align}
Applying again the Peter-Paul inequality \cite{W:1971}, $cd\leq \frac{\varepsilon c^2}{2} +\frac{d^2}{2\varepsilon}$ for all $c,d,\varepsilon>0$, with \mbox{$c=\left(1- \left(1\mathbb{-}\left(1 -\frac{\epsilon a^{2} H^{\ast}K}{2}\right)^{2}\right)\frac{\left(1 - \sigma\right)}{2}\right)^{\frac{k}{2}}|\theta[0]-\theta^{\ast}|$}, $d=\mathcal{O}\left(a+\epsilon\right)$ and $\varepsilon=\frac{1}{2}$, inequality (\ref{eq:yQ_1}) is upper bounded by
\begin{align}
    |y[k] - Q^{\ast}|& \leq 2\left(1\mathbb{-} \left(1\mathbb{-}\left(1 \mathbb{-}\frac{\epsilon a^{2} H^{\ast}K}{2}\right)^{2}\right)\frac{\left(1 \mathbb{-} \sigma\right)}{2}\right)^{k} \nonumber \\
 &\quad\mathbb{\times}\frac{1}{2}|H^{\ast}||\theta[0]-\theta^{\ast}|^{2}+(1+|H^{\ast}|)\mathcal{O}\left(a+\epsilon\right)^{2} \! . \label{eq:yQ_2}
\end{align}
From (\ref{eq:normy_siso}), $\frac{1}{2}|H^{\ast}||\theta[0]-\theta^{\ast}|^{2}=|y[0]-Q^{\ast}|$ and, according to \cite[Definition~10.1]{K:2002}, $(1+|H^{\ast}|)\mathcal{O}\left(a+\epsilon\right)^{2}$ remains as an order of magnitude $\mathcal{O}\left(a^{2}+\epsilon^2\right)$. Moreover, $\mathcal{O}\left(a+\epsilon\right)^{2}\leq \mathcal{O}\left(a^{2}+\epsilon^{2}\right)$. Thus, inequality (\ref{eq:yQ_2}) is upper bounded by
\begin{align}
    |y[k] - Q^{\ast}|&\leq 2\left(1\mathbb{-} \left(1\mathbb{-}\left(1 \mathbb{-}\frac{\epsilon a^{2} H^{\ast}K}{2}\right)^{2}\right)\frac{\left(1 \mathbb{-} \sigma\right)}{2}\right)^{k} \nonumber \\
 &\quad\mathbb{\times}|y[0]-Q^{\ast}|+\mathcal{O}\left(a^2+\epsilon^2\right)\,,  \label{eq:yQ_3}
\end{align}
such that inequality (\ref{eq:normY_thm1}) is also verified.

\begin{flushleft}
\underline{\it B. Avoidance of Zeno-Like Behavior}
\end{flushleft}

We now investigate the existence of a minimum interval between two consecutive triggering instants in the considered discrete-time framework. Due to the discrete-time nature of the closed-loop system, inter-execution times are inherently lower bounded by one sampling step, and thus Zeno behavior is excluded by construction.

Accordingly, the objective here is not only to establish Zeno avoidance, but rather to ensure that the triggering mechanism yields a well-posed and nontrivial execution pattern. In particular, it is required that the event-triggering condition does not lead to pathological behaviors such as persistent triggering at every iteration, which would eliminate any computational or communication advantages.

Now, from (\ref{eq:hatG_av_k+1_1}) and (\ref{eq:e_av_k_1}), we can write, for all $k \in [k_l, k_{l+1})$:  
\begin{align}
\hat{G}_{\rm{av}}[k+1]&=\left(1 \mathbb{-}\frac{\epsilon a^{2} H^{\ast}K}{2}\right)\hat{G}_{\rm{av}}[k]-\frac{\epsilon a^{2} H^{\ast}K}{2}e_{\rm{av}}[k]\,, \label{eq:hatG_av_k+1_2} \\
&=\hat{G}_{\rm{av}}[k]-\frac{\epsilon a^{2} H^{\ast}K}{2}\hat{G}_{\rm{av}}[k_l] \,, \label{eq:hatG_av_k+1_3} \\
&=\hat{G}_{\rm{av}}[k_{l}]-e_{\rm{av}}[k]-\frac{\epsilon a^{2} H^{\ast}K}{2}\hat{G}_{\rm{av}}[k_l] \,, \label{eq:hatG_av_k+1_3_20260330} \\
e_{\rm{av}}[k+1] &= \hat{G}_{\rm{av}}[k_l] - \hat{G}_{\rm{av}}[k+1]\,. 
\label{eq:e_av_k_2}
\end{align}
By using (\ref{eq:hatG_av_k+1_3_20260330}) and (\ref{eq:e_av_k_1}), one can conclude $\hat{G}_{\rm{av}}[k]=\hat{G}_{\rm{av}}[k_{l}]-e_{\rm{av}}[k]$ such that (\ref{eq:e_av_k_2}) can be rewritten as
\begin{align}
e_{\rm{av}}[k+1] &= \hat{G}_{\rm{av}}[k_l] -\hat{G}_{\rm{av}}[k+1] \nonumber \\
&= e_{\rm{av}}[k]+\frac{\epsilon a^{2} H^{\ast}K}{2}\hat{G}_{\rm{av}}[k_l] \,. 
\label{eq:e_av_k_3}
\end{align}
Now, the iteration of (\ref{eq:hatG_av_k+1_3}) and (\ref{eq:e_av_k_3}) for $k=\{k_{l},k_{l}+1,k_{l}+2,\ldots\}$, with $k \in [k_{l},k_{l+1})$, together with the condition $e[k_{l}]=0$, yields
\begin{align}
\hat{G}_{\rm{av}}[k_{l}+1]&=\hat{G}_{\rm{av}}[k_{l}]-\frac{\epsilon a^{2} H^{\ast}K}{2}\hat{G}_{\rm{av}}[k_l] \nonumber \\
&=\left(1-\frac{\epsilon a^{2} H^{\ast}K}{2}\right)\hat{G}_{\rm{av}}[k_l] \,, \nonumber \\ 
\hat{G}_{\rm{av}}[k_{l}+2]&=\hat{G}_{\rm{av}}[k_{l}+1]-\frac{\epsilon a^{2} H^{\ast}K}{2}\hat{G}_{\rm{av}}[k_l] \nonumber \\ 
&=\left(1-2\frac{\epsilon a^{2} H^{\ast}K}{2}\right)\hat{G}_{\rm{av}}[k_l]\,, \nonumber \\ 
\hat{G}_{\rm{av}}[k_{l}+3]&=\hat{G}_{\rm{av}}[k_{l}+2]-\frac{\epsilon a^{2} H^{\ast}K}{2}\hat{G}_{\rm{av}}[k_l] \nonumber \\
&=\left(1-3\frac{\epsilon a^{2} H^{\ast}K}{2}\right)\hat{G}_{\rm{av}}[k_l] \,, \nonumber \\
\vdots ~~~~ & ~~~~~~~~~~~~~~~~~~~~~~~~~~ \vdots \nonumber \\
\hat{G}_{\rm{av}}[k]&=\left(1-(k-k_{l})\frac{\epsilon a^{2} H^{\ast}K}{2}\right)\hat{G}_{\rm{av}}[k_l]\,, \label{eq:hatG_av_k_2}
\end{align}
and
\begin{align}
e_{\rm{av}}[k_{l}+1] &= e_{\rm{av}}[k_{l}]+\frac{\epsilon a^{2} H^{\ast}K}{2}\hat{G}_{\rm{av}}[k_l] \nonumber \\
&= \frac{\epsilon a^{2} H^{\ast}K}{2}\hat{G}_{\rm{av}}[k_l]\,, \nonumber \\
e_{\rm{av}}[k_{l}+2] &=   e_{\rm{av}}[k_{l}+1]+\frac{\epsilon a^{2} H^{\ast}K}{2}\hat{G}_{\rm{av}}[k_l] \nonumber \\
&=2\frac{\epsilon a^{2} H^{\ast}K}{2}\hat{G}_{\rm{av}}[k_l]\,,\nonumber \\
e_{\rm{av}}[k_{l}+3] &= e_{\rm{av}}[k_{l}+2]+\frac{\epsilon a^{2} H^{\ast}K}{2}\hat{G}_{\rm{av}}[k_l]\nonumber \\
&= 3\frac{\epsilon a^{2} H^{\ast}K}{2}\hat{G}_{\rm{av}}[k_l] \nonumber \\
\vdots ~~~~ & ~~~~~~~~~~~~~~~~~~~~~~~~~~ \vdots \nonumber \\
e_{\rm{av}}[k] &= (k-k_{l})\frac{\epsilon a^{2} H^{\ast}K}{2}\hat{G}_{\rm{av}}[k_l]\,.
\label{eq:e_av_k_4}
\end{align}
Now, the idea is to combine an upper bound for $|e[k]|$ with a lower bound of $|\hat{G}[k]|$ in order to determine when the event-triggering condition is violated. Therefore, by invoking the average method for discrete-time systems \cite[Theorem~2.2.1]{BFS:1988}, we have 
\begin{align}
|\hat{G}[k]-\hat{G}_{\rm{av}}[k]|&\leq\mathcal{O}\left(\epsilon\right)\,, \label{eq:sastry_3} \\
|e[k]-e_{\rm{av}}[k]|&\leq\mathcal{O}\left(\epsilon\right)\,, \label{eq:sastry_4}
\end{align}
such that, by using the triangle inequality \cite{A:1957}, one additionally writes 
\begin{align}
|\hat{G}[k]|&=|\hat{G}_{\rm{av}}[k]+\hat{G}[k]-\hat{G}_{\rm{av}}[k]| \nonumber \\
&\geq \left||\hat{G}_{\rm{av}}[k]|-|\hat{G}[k]-\hat{G}_{\rm{av}}[k]|\right| \,, \label{eq:normG_k_1} \\
|e[k]|&=|e_{\rm{av}}[k]+e[k]-e_{\rm{av}}[k]| \nonumber \\
&\leq |e_{\rm{av}}[k]|+|e[k]-e_{\rm{av}}[k]| \,. \label{eq:normE_k_1}
\end{align}  
Using (\ref{eq:hatG_av_k_2}), (\ref{eq:e_av_k_4}), (\ref{eq:sastry_3}) and (\ref{eq:sastry_4}), the following bounds are obtained for (\ref{eq:normG_k_1}) and (\ref{eq:normE_k_1}), respectively:
\begin{align}
|\hat{G}[k]|&\geq \left|\left|\left(1-(k-k_{l})\frac{\epsilon a^{2} H^{\ast}K}{2}\right)\hat{G}_{\rm{av}}[k_l]\right|-\mathcal{O}\left(\epsilon\right)\right| \nonumber \\
&=\underline{\hat{G}}[k]\,, \label{eq:normG_k_2} \\
|e[k]|&\leq  \left|(k-k_{l})\frac{\epsilon a^{2} H^{\ast}K}{2}\hat{G}_{\rm{av}}[k_l]\right|+\mathcal{O}\left(\epsilon\right) \nonumber \\
&=\bar{e}[k]\,. \label{eq:normE_k_2}
\end{align}  
At this stage, we combine the upper bound for the error $\bar{e}[k]$ in (\ref{eq:normE_k_2}) with the lower bound of the gradient estimate $\underline{\hat{G}}[k]$ in order to determine the minimum number of iterations after which the event-triggering condition is necessarily violated. In other words, if at some iteration $k$ one has $\sqrt{\sigma} \underline{\hat{G}}[k]-\alpha\bar{e}[k] \leq 0 $, then the condition (\ref{eq:k_next_event}) can no longer be ensured, and thus an event must occur. Therefore, the first iteration $k^{\ast}$ such that $\sqrt{\sigma} \underline{\hat{G}}[k]-\alpha\bar{e}[k] \leq 0 $ provides an estimate of the minimum number of iterations after which the event-triggering condition is necessarily violated, {\it i.e.},
\begin{align}
    k^{\ast}&= \arg \min_{k\in\mathbb{N}}\left\{\alpha\left(\left|(k-k_{l})\frac{\epsilon a^{2} H^{\ast}K}{2}\hat{G}_{\rm{av}}[k_l]\right|+\mathcal{O}\left(\epsilon\right)\right)\geq \right.\nonumber \\
    &\quad\left.\sqrt{\sigma}\left|\left|\left(1-(k-k_{l})\frac{\epsilon a^{2} H^{\ast}K}{2}\right)\hat{G}_{\rm{av}}[k_l]\right|-\mathcal{O}\left(\epsilon\right)\right|\right\}\,.
\end{align}
%
Hence, the proof is complete. 
\end{proof} 


\section{Simulation results} \label{sec:sim_siso}

To illustrate the main features of the proposed discrete-time event-triggered extremum-seeking strategy, consider the nonlinear map in (\ref{eq:Q_1_event_siso}), with input $\theta[k]\in \mathbb{R}$, output $y[k]\in \mathbb{R}$, and unknown parameters $H^{\ast}=-0.7$, $Q^{\ast}=2$, and $\theta^{\ast}=3$. The dither signal is selected with parameters $a=0.1$ and $\omega=7$ [rad/sec], while the event-triggering parameters are chosen as $\sigma=0.7$ and $\alpha=0.74$. The control gain is set to $K=-240$, the step size is $\epsilon=0.18$ [sec], and the initial condition is $\hat{\theta}[0]=0.5$.

In Figures~\ref{fig:theta_siso} and \ref{fig:y_siso}, we can check $\theta[k]$ converging to $\theta^{\ast}$ and the output $y[k]$ approaching its corresponding extremum value. 

Figure~\ref{fig:U_siso} shows the aperiodic pattern of the control update instants, characterized by the nonuniform distribution of inter-execution intervals inherent to the event-triggered implementation. In this framework, the control input is updated only when the triggering condition is satisfied. Over 1000 iterations, the control law is updated only 19 times, yielding an average inter-execution interval of $9.47$ seconds.

Figure~\ref{fig:hatG_siso} presents the evolution of the gradient estimate, showing that it converges to zero while remaining well behaved along the iterations. The event-triggered mechanism ensures that all signals remain bounded and exhibit smooth convergence, as expected.

\newpage
These results highlight the effectiveness of the proposed discrete-time event-triggered extremum-seeking scheme in significantly reducing the number of control updates while still ensuring convergence to the extremum.

\begin{figure}[h!]
	\centering
	\subfigure[Input of the nonlinear map, \mbox{$\theta[k]$}. \label{fig:theta_siso}]{\includegraphics[width=5.5cm]{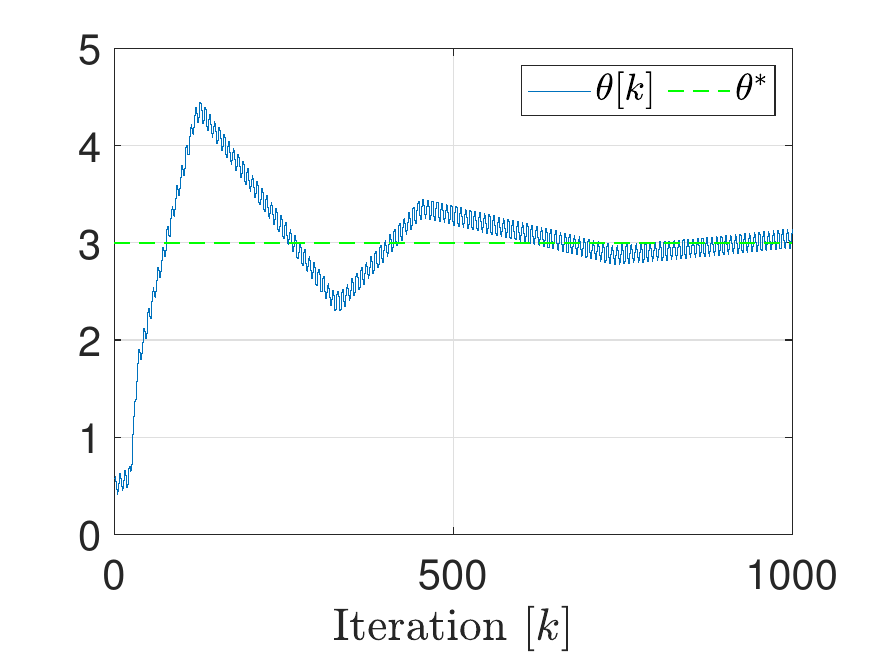}}
    \\
	\subfigure[Output of the nonlinear map, \mbox{$y[k]$}. \label{fig:y_siso}]{\includegraphics[width=5.5cm]{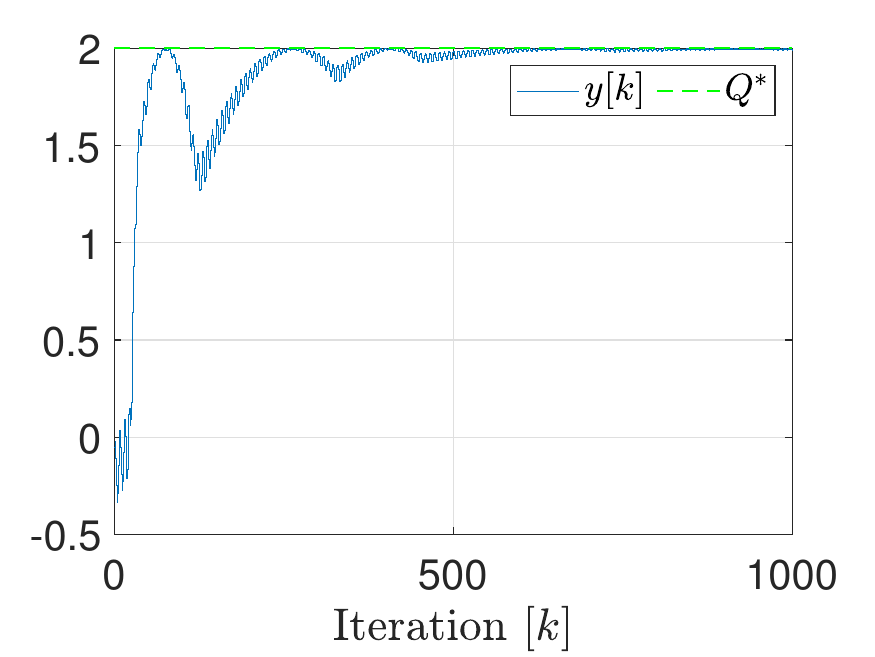}}
	\\
	\subfigure[Control input, \mbox{$u[k]$} \label{fig:U_siso}]{\includegraphics[width=5.5cm]{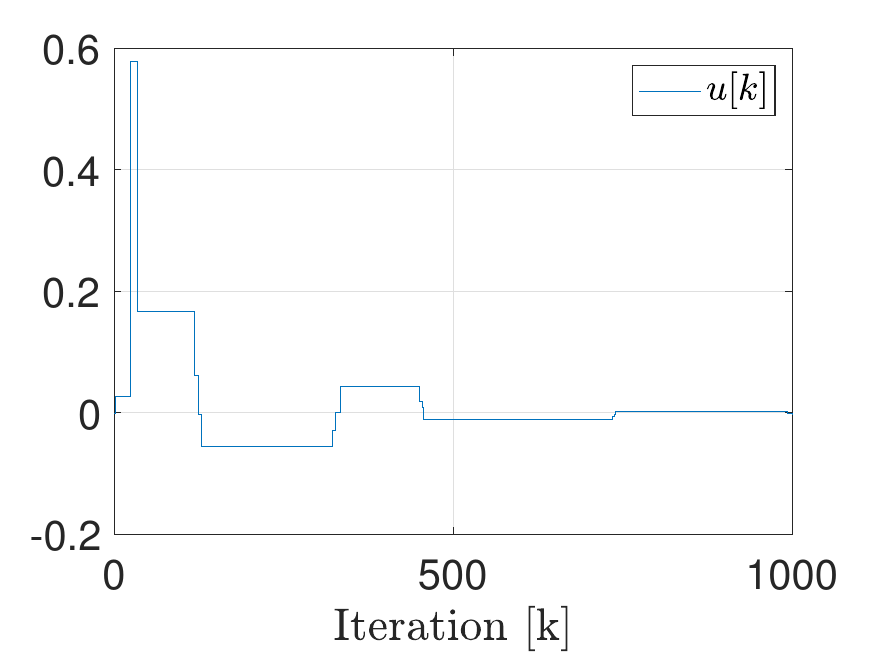}}
	\\
	\subfigure[Gradient estimate, \mbox{$\hat{G}[k]$}. \label{fig:hatG_siso}]{\includegraphics[width=5.5cm]{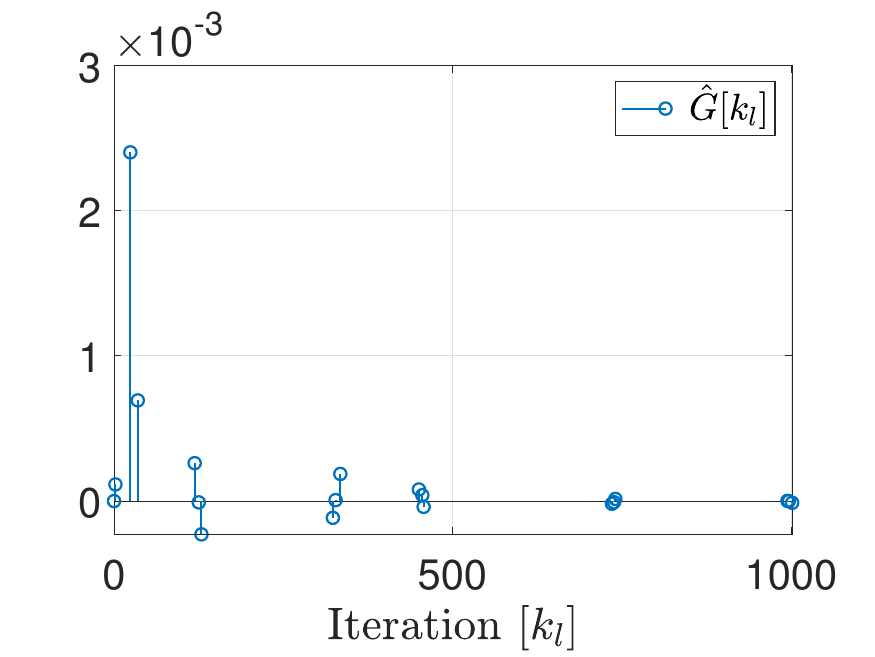}}   
	\caption{Simulations of the Discrete-time Event-triggered Extremum Seeking Control System. \label{fig:NESC_siso}}
    \begin{picture}(300,0)(-120,-490)
    \includegraphics[width=2.2cm]{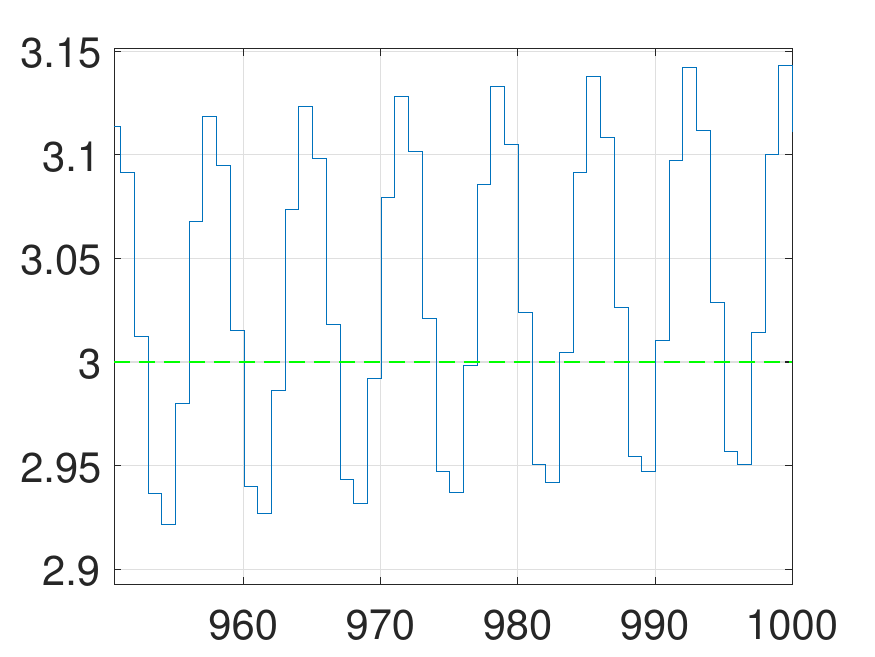}
    \end{picture}
    \begin{picture}(300,0)(-90,-362.5)
    \includegraphics[width=3.2cm]{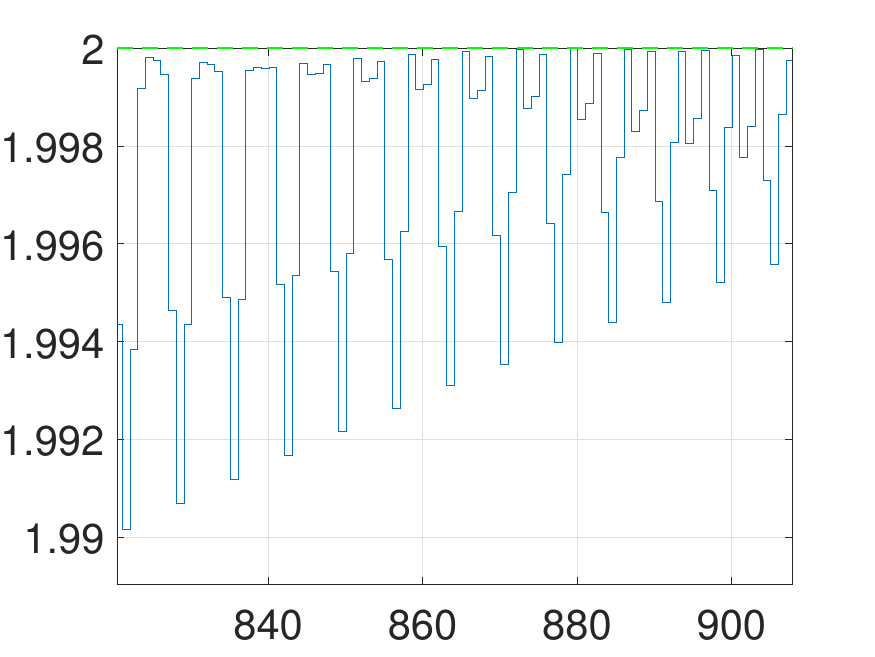}
    \end{picture}
    \begin{picture}(300,0)(-83,-258)
    \includegraphics[width=2.7cm]{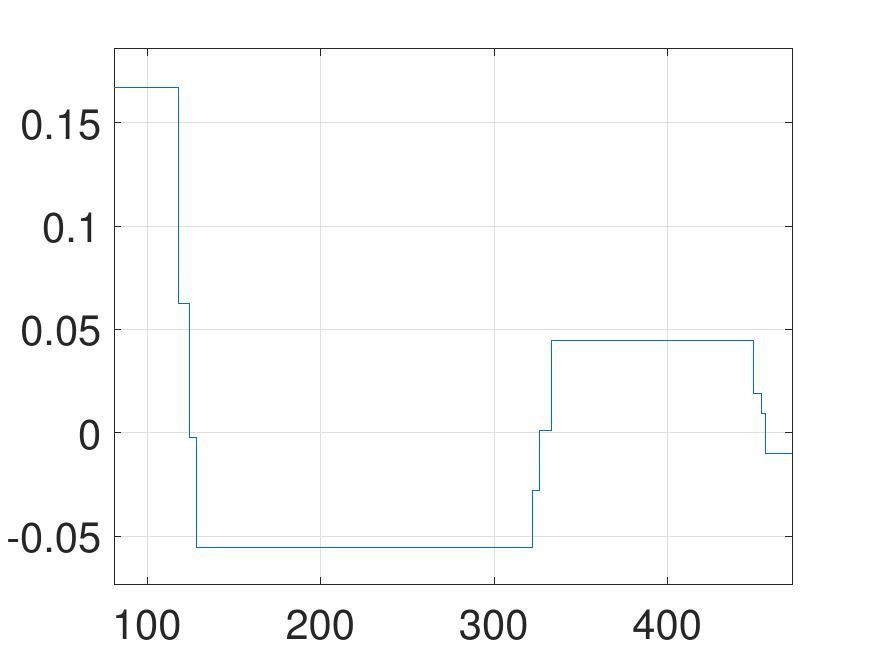}
    \end{picture}
    \begin{picture}(300,0)(-75,-116)
    \includegraphics[width=2.9cm]{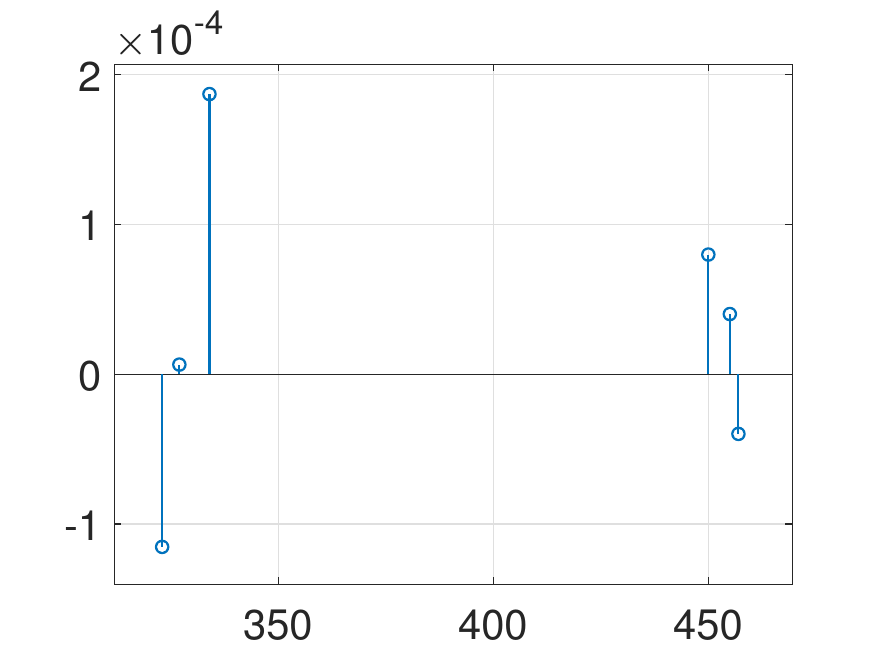}
    \end{picture}
\end{figure}

 \newpage

\section{Conclusions}

This paper proposed a discrete-time event-triggered extremum seeking scheme for real-time optimization of static nonlinear maps, where control updates occur only when required by a state-dependent triggering condition. By combining discrete-time averaging and Lyapunov analysis, we established practical convergence to a neighborhood of the unknown extremum and exponential stability of the average dynamics, showing that the triggering mechanism preserves the essential optimization properties of classical extremum seeking while significantly reducing the number of input updates. Simulation results confirmed that the proposed approach achieves comparable optimization performance with substantially fewer actuation events, highlighting its potential for resource-aware digital implementations.

Future work may explore extensions to multi-agent and networked extremum seeking settings, robustness with respect to measurement noise and delays, and experimental validation in embedded digital platforms, where the advantages of discrete-time event-triggered implementations are expected to be even more pronounced.




\begin{thebibliography}{10}
\providecommand{\url}[1]{#1}
\csname url@samestyle\endcsname
\providecommand{\newblock}{\relax}
\providecommand{\bibinfo}[2]{#2}
\providecommand{\BIBentrySTDinterwordspacing}{\spaceskip=0pt\relax}
\providecommand{\BIBentryALTinterwordstretchfactor}{4}
\providecommand{\BIBentryALTinterwordspacing}{\spaceskip=\fontdimen2\font plus
\BIBentryALTinterwordstretchfactor\fontdimen3\font minus
  \fontdimen4\font\relax}
\providecommand{\BIBforeignlanguage}[2]{{%
\expandafter\ifx\csname l@#1\endcsname\relax
\typeout{** WARNING: IEEEtran.bst: No hyphenation pattern has been}%
\typeout{** loaded for the language `#1'. Using the pattern for}%
\typeout{** the default language instead.}%
\else
\language=\csname l@#1\endcsname
\fi
#2}}
\providecommand{\BIBdecl}{\relax}
\BIBdecl

\bibitem{KW:2000}
M.~Krstic and H.-H. Wang, ``Stability of extremum seeking feedback for general
  nonlinear dynamic systems,'' \emph{Automatica}, vol.~36, pp. 595--601, 2000.

\bibitem{AK:2003}
K.~B. Ariyur and M.~Krstic, \emph{Real-Time Optimization by Extremum-Seeking
  Control}.\hskip 1em plus 0.5em minus 0.4em\relax Wiley, 2003.

\bibitem{TRoux:2022}
T.~R. Oliveira and M.~Krstic, \emph{Extremum Seeking through Delays and
  PDEs}.\hskip 1em plus 0.5em minus 0.4em\relax Society for Industrial and
  Applied Mathematics, 2022.

\bibitem{T:2007}
P.~Tabuada, ``Event-triggered real-time scheduling of stabilizing control
  tasks,'' \emph{IEEE Transactions on Automatic Control}, vol.~52, pp.
  1680--1685, 2007.

\bibitem{VHPR:2023a}
V.~H.~P. Rodrigues, T.~R. Oliveira, L.~Hsu, M.~Diagne, and M.~Krstic,
  ``Event-triggered and periodic event-triggered extremum seeking control,''
  \emph{Automatica}, vol. 174, p. 112161, 2025.

\bibitem{BFS:1988}
E.-W. Bai, L.-C. Fu, and S.~Sastry, ``Averaging analysis for discrete time and
  sampled data adaptive systems,'' \emph{IEEE Transactions on Circuits and
  Systems}, vol.~35, no.~2, pp. 137--148, 1988.

\bibitem{PPY:2004}
V.~A. Plotnikov, L.~I. Plotnikova, and A.~T. Yarovoi, ``Averaging method for
  discrete systems and its application to control problems,'' \emph{Nonlinear
  Oscillations}, vol.~7, no.~2, pp. 240--253, 2004.

\bibitem{EDK:2010}
A.~Eqtami, D.~V. Dimarogonas, and K.~J. Kyriakopoulos, ``Event-triggered
  control for discrete-time systems,'' in \emph{Proceedings of the 2010
  American Control Conference}, 2010, pp. 4719--4724.

\bibitem{frank:2023}
X.~Wang, J.~Berberich, J.~Sun, G.~Wang, F.~Allg{\" o}wer, and J.~Chen,
  ``Model-based and data-driven control of event-and self-triggered
  discrete-time linear systems,'' \emph{IEEE Transactions on Cybernetics},
  vol.~53, no.~9, pp. 6066--6079, 2023.

\bibitem{frank:2023b}
S.~Wildhagen, J.~Berberich, M.~Hertneck, and F.~Allg{\" o}wer, ``Data-driven
  analysis and controller design for discrete-time systems under aperiodic
  sampling,'' \emph{IEEE Transactions on Automatic Control}, vol.~68, no.~6,
  pp. 3210--3225, 2023.

\bibitem{AW:1997}
K.~J. {\r A}str{\" o}m and B.~Wittenmark, \emph{Computer-Controlled Systems:
  Theory and Design}.\hskip 1em plus 0.5em minus 0.4em\relax Prentice Hall,
  1997.

\bibitem{GKN:2012}
A.~Ghaffari, M.~Krstic, and D.~Nesic, ``Multivariable {Newton}-based extremum
  seeking,'' \emph{Automatica}, vol.~48, pp. 1759--1767, 2012.

\bibitem{K:2014}
M.~Krstic, ``Extremum seeking control,'' in \emph{Encyclopedia of Systems and
  Control}.\hskip 1em plus 0.5em minus 0.4em\relax Springer, 2014.

\bibitem{K:2002}
H.~K. Khalil, \emph{Nonlinear Systems}.\hskip 1em plus 0.5em minus 0.4em\relax
  Prentice Hall, 2002.

\bibitem{W:1971}
F.~Warner, \emph{Foundations of Differentiable Manifolds and Lie Groups}.\hskip
  1em plus 0.5em minus 0.4em\relax Scott Foresman and Company, 1971.

\bibitem{A:1957}
T.~Apostol, \emph{Mathematical Analysis - A Modern Approach to Advanced
  Calculus}.\hskip 1em plus 0.5em minus 0.4em\relax Addison-Wesley Publishing
  Company, 1957.

\end{thebibliography}
\end{document}